\newtheorem{Theorem}{Theorem}[section]
\newtheorem{Lemma}[Theorem]{Lemma}
\newtheorem{Corollary}[Theorem]{Corollary}
\def\V{\mbox{Var}}
\newcommand{\QED}{{\hfill$\Box$\medskip}}
\def\R\re
\def\V{\bf V}
\def \re{{\mathbb R}}
\def \0{\lambda_{0}}
\begin{document}

\title[Isoperimetric regions and Yamabe constants]{Isoperimetric
  regions in spherical cones and Yamabe constants of $M \times S^1$}

\author[J. Petean]{Jimmy Petean}

\address{CIMAT, A.P. 402, 36000, Guanajuato. Gto., M\'{e}xico.}
\email{jimmy@cimat.mx}

\thanks{J. Petean is supported by grant 46274-E of CONACYT}

%\subjclass{53C25, 53C21, 58F17, 35J15}

%\maketitle

\begin{abstract} We study isoperimetric regions on Riemannian
manifolds of the form $(M^n \times (0 ,\pi ), \sin^2 (t) g + dt^2 )$
where $g$ is a metric of positive Ricci curvature
$\geq n-1$. When $g$ is an Einstein metric we use this
to compute  the Yamabe constant of $(M\times \re , g+ dt^2 )$
and so to obtain lower bounds for the 
Yamabe invariant of $M\times S^1$.
\end{abstract}

\maketitle

\section{Introduction}

Given a closed Riemannian manifold $(M,g)$ we consider the
conformal class of the metric $g$, $[g]$. The Yamabe 
constant of $[g]$,  $Y(M,[g])$, is the
infimum of the normalized total scalar curvature functional on
the conformal class. Namely,

$$Y(M,[g])= \inf_{h\in [g]} 
\frac{\int {\bf s}_h \ dvol(h)}{(Vol(M,h))^{\frac{n-2}{n}}},$$

\noindent
where ${\bf s}_h$ denotes the scalar curvature of the metric $h$
and $dvol(h)$ its volume element.

If one writes metrics conformal to $g$ as $h=f^{4/(n-2)} \ g$,
one obtains the expression

$$Y(M,[g])= \inf_{f\in C^{\infty} (M)} 
\frac{\int ( \  4a_n {\| \nabla f \|}_g^2 + f^2 {\bf s}_g \ ) 
\  dvol(g)}{{\| f\|}_{p_n}^2},$$

\noindent
where $a_n =4(n-1)/(n-2) $ and $p_n =2n/(n-2)$. It is a fundamental
result 
on the subject that the infimum is actually achieved
(\cite{Yamabe, Trudinger, Aubin, Schoen}). The functions $f$ achieving
the infimum are called {\it Yamabe functions} and the corresponding metrics
$f^{4/(n-2)} \ g$ are called {\it Yamabe metrics}. Since the critical points 
of the total scalar curvature functional restricted to a conformal 
class of metrics are precisely the metrics of constant scalar
curvature in the conformal class, Yamabe metrics are metrics of
constant scalar curvature.

It is well known that by considering functions supported in a small
normal neighborhood of a point one can prove that 
$Y(M^n,[g]) \leq Y(S^n ,[g_0 ])$, where $g_0$ is the round metric
of radius one on the sphere and $(M^n ,g)$ is any closed n-dimensional 
Riemannian manifold (\cite{Aubin}). 
We will use the notation $Y_n = Y(S^n ,[g_0 ])$ and 
$V_n =Vol(S^n ,g_0 )$. Therefore $Y_n =n(n-1)V_n^{\frac{2}{n}}$.

Then one 
defines the {\it Yamabe invariant} of a closed manifold $M$ 
\cite{Kobayashi, Schoen2} as

$$Y(M)=\sup_g Y(M,[g]) \leq  Y_n .$$

It follows that $Y(M)$ is positive if and only if $M$ admits a
metric of positive scalar curvature. Moreover, the sign of 
$Y(M)$ determines the technical difficulties in understanding 
the invariant. When the Yamabe constant of a conformal class
is non-positive there is a unique metric  (up to multiplication
by a positive constant) of constant scalar curvature in the
conformal class and if $g$ is any metric in the conformal 
class, the Yamabe constant is bounded from below by
$(\inf_M {\bf s}_g  ) \ (Vol(M,g))^{2/n}$. This can be used for instance
to study the behavior of the invariant under surgery and so to
obtain information using cobordism theory \cite{Yun, Petean, Botvinnik}.
Note also that in the non-positive case the Yamabe invariant 
coincides with Perelman's invariant \cite{Ishida}.
The previous estimate  is no longer true
in the positive case, but one does get a lower bound in the case of
positive Ricci curvature by a theorem of S. Ilias: 
if $Ricci(g)\geq \lambda g $ 
($\lambda >0$) then $Y(M,[g]) \geq n \lambda (Vol(M,g))^{2/n}$
(\cite{Ilias}). Then in order to use this inequality to
find lower bounds on the Yamabe invariant of a closed 
manifold $M$ one would try to maximize the volume of the manifold
under some positive lower bound of the Ricci curvature. 
Namely, if one denotes ${\bf Rv} (M)= \sup \{ Vol(M,g): Ricci(g)\geq
(n-1) g \} $ then one gets $Y(M) \geq n(n-1) ({\bf Rv} (M))^{2/n}$ 
(one should define ${\bf Rv} (M) =0$ if $M$ does not admit 
a metric of positive Ricci curvature). Very little is known 
about the invariant ${\bf Rv} (M)$. Of course, Bishop's inequality 
tells us that for any n-dimensional closed manifold 
${\bf Rv} (M^n) \leq {\bf Rv} (S^n )$ 
(which is of course attained by the volume
of the metric of constant sectional curvature 1). Moreover,
G. Perelman \cite{Perelman} proved that there is a
constant $\delta =\delta_n >0$ such that if  ${\bf Rv} (M) \geq
{\bf Rv} (S^n ) -\delta_n $ then 
$M$ is homeomorphic to $S^n$. Beyond this, results on
${\bf Rv} (M)$ have been obtained by computing Yamabe invariants, so
for instance ${\bf Rv} ({\bf CP}^2 )= 2 \pi^2 $
(achieved by the Fubini-Study 
metric as shown by C. LeBrun \cite{Lebrun} and M. Gursky and C. 
LeBrun \cite{Gursky}) and ${\bf Rv} ({\bf RP}^3) = \pi^2$ (achieved by the 
metric of constant sectional curvature as shown by H. Bray and 
A. Neves \cite{Bray}).

Of course, there is no hope to apply the previous comments directly
when the fundamental group of $M$ is infinite. Nevertheless it
seems that even in this case the Yamabe invariant is
realized by conformal classes of metrics which maximize volume
with a fixed positive lower bound on the Ricci curvature
``in certain sense''. The standard example is $S^{n-1} \times 
S^1$. The fact that $Y(S^n \times S^1 ) =Y_{n+1}$ is one
of the first things we learned about the Yamabe invariant
\cite{Kobayashi, Schoen2}. One way to see this is as follows:
first one notes that $\lim_{T\rightarrow \infty} 
Y(S^n \times S^1 ,[g_0 + T^2 dt^2 ])=
Y(S^n \times \re, [g_0 + dt^2 ])$ \cite{Akutagawa} 
(the Yamabe constant for a non-compact Riemannian manifold
is computed as the infimum of the Yamabe functional over
compactly supported functions).
But the Yamabe
function for $g_0 + dt^2$ is precisely the conformal factor
between $S^n \times \re$ and $S^{n+1} -\{ S, N \}$. Therefore 
one can think of $Y(S^n \times S^1 ) =Y_{n+1}$ as realized 
by the positive
Einstein metric on $S^{n+1} -\{ S, N \} $. We will see in this
article that a similar situation occurs for any closed positive
Einstein manifold $(M,g)$  (although we only get the lower
bound for the invariant). 

\vspace{.3cm}

Let $(N,h)$ be a closed Riemannian manifold. An 
{\it isoperimetric region}
is an open subset $U$ with boundary $\partial U$ such that 
$\partial U$ minimizes area among hypersurfaces bounding a
region of volume $Vol(U)$. Given any positive number $s$, 
$s<Vol(N,h)$, there exists an isoperimetric region of
volume $s$. Its boundary is a stable constant mean curvature 
hypersurface with some singularities of codimension at least 7.
Of course one does not need a closed Riemannian manifold
to consider isoperimetric regions, apriori one only
needs to be able to compute volumes of open subsets and areas
of hypersurfaces. One defines the {\it isoperimetric function}
of $(N,h)$ as $I_h :(0,1) \rightarrow \re>0$ by

$$I_h (\beta) =\inf \{ Vol(\partial U)/Vol(N,h) : 
Vol(U,h) = \beta Vol(N,h) \},$$

\noindent
where $Vol(\partial U)$ is measured with the Riemannian metric 
induced by $h$ (on the non-singular part of $\partial U$).

Given a closed Riemannian manifold $(M,g)$ we will call
the {\it spherical cone} on $M$ the space $X$ obtained collapsing
$M \times \{0 \} $ and $M\times \{ \pi \}$ in
$M\times [0,\pi ]$ to points $S$ and $N$ (the vertices)
with the metric ${\bf g} =\sin^2 (t)g + dt^2$ 
(which is a Riemannian metric on $X-\{ S,N \}$). Now if
$Ricci(g) \geq (n-1) g$ one can see that $Ricci({\bf g})
\geq n{\bf g}$. One should compare this with the Euclidean cones 
considered by F. Morgan and M. Ritor\'{e} in \cite{Morgan}:
$\hat{g} =t^2 g + dt^2$ for which $Ricci(g) \geq (n-1)g $
implies that $Ricci(\hat{g}) \geq 0$. The importance of these
spherical cones for the study of Yamabe constants is that 
if one takes out the vertices the corresponding (non-complete)
Riemannian manifold is conformal to 
$M\times \re$. But using the (warped product version) of the
Ros Product Theorem \cite[Proposition 3.6]{Ros}  (see 
\cite[Section 3]{Morgan2}) and the Levy-Gromov isoperimetric
inequality \cite{Gromov} one can understand isoperimetric 
regions in these spherical cones. Namely,

\begin{Theorem} Let $(M^n,g)$ be a compact manifold with 
Ricci curvature $Ricci(g) \geq (n-1)g$. Let $(X,{\bf g})$ be
its spherical cone. Then geodesic balls around any of the 
vertices are isoperimetric.
\end{Theorem}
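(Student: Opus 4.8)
The plan is to reduce the isoperimetric problem on the spherical cone $(X,{\bf g})$ to the corresponding problem on its model, the round sphere $S^{n+1}$, which is itself the spherical cone over $(S^n,g_0)$ (so that $S^{n+1} = (S^n\times(0,\pi), \sin^2(t)g_0 + dt^2)$, the two vertices being the poles). First I would introduce the projection $\pi:X\to[0,\pi]$, $\pi(x,t)=t$, whose fibers $M_t=M\times\{t\}$ carry the rescaled metric $\sin^2(t)g$; thus $Vol(M_t)=\sin^n(t)\,Vol(M,g)$ and $Vol(X)=Vol(M,g)\int_0^\pi \sin^n(t)\,dt$. To a region $\Omega\subset X$ I associate its fiber volume fraction $\beta(t)=Vol(\Omega\cap M_t)/Vol(M_t)\in[0,1]$, and observe that the normalized volume $Vol(\Omega)/Vol(X)$ depends on $\Omega$ only through $\int_0^\pi \beta(t)\sin^n(t)\,dt$, the factor $Vol(M,g)$ cancelling. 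The same construction on $S^{n+1}$ attaches to each profile $\beta$ a region of the same normalized volume.

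Next I would bring in the two cited inputs. Since $Ricci(g)\geq(n-1)g$, the Levy-Gromov inequality gives that the normalized perimeter of any region of $(M,g)$ is at least that of a geodesic ball of equal volume fraction in $(S^n,g_0)$; after rescaling the fibers by $\sin(t)$ this controls the vertical part of $P(\Omega)$. The warped-product version of the Ros Product Theorem (\cite[Proposition 3.6]{Ros}, \cite[Section 3]{Morgan2}) then upgrades this fiberwise estimate to an honest perimeter comparison: replacing $\Omega\cap M_t$ in every fiber by a geodesic ball of the same fraction $\beta(t)$ does not increase perimeter and yields a fiberwise-symmetric competitor $\Omega^*$ in $S^{n+1}$ with the same profile $\beta$ and with $P(\Omega)/Vol(M,g)\geq P(\Omega^*)/V_n$. (As a consistency check, both sides equal $\sin^n(t_0)$ when $\Omega=\{t<t_0\}$.)

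With the comparison available I would invoke the classical fact that isoperimetric regions of the round sphere $S^{n+1}$ are geodesic balls, and note that the geodesic balls centered at the poles are exactly the caps $\{t<t_0\}$ and $\{t>t_0\}$ in cone coordinates, since $t$ is the ${\bf g}$-distance to the vertex along radial geodesics. Dividing the previous inequality by $\int_0^\pi\sin^n(t)\,dt$ (and using $V_{n+1}=V_n\int_0^\pi\sin^n(t)\,dt$) gives $P(\Omega)/Vol(X)\geq P(\Omega^*)/V_{n+1}\geq I^{S^{n+1}}(\beta)$, where the last infimum is realized by the spherical cap. The cap $B=\{t<t_0(\beta)\}$ in $X$ has normalized perimeter $\sin^n(t_0)$, which equals $I^{S^{n+1}}(\beta)$, so $B$ attains the lower bound. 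Hence the geodesic ball of the prescribed volume around the vertex $S$ is isoperimetric, and the $S\leftrightarrow N$ symmetry gives the same at $N$.

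The \textbf{main obstacle} is the warped-product Ros comparison: controlling the horizontal contribution to the perimeter --- the part arising from the variation of $\beta(t)$ --- under fiberwise symmetrization is the delicate point, whereas the vertical (Levy-Gromov) estimate and the final one-dimensional minimization (immediate because $\sin^n(t)$ vanishes at both endpoints, forcing the optimal sublevel set to be a cap containing a vertex) are comparatively routine. I would also take care that the two cone vertices, being isolated points of codimension $n+1\geq 2$, contribute nothing to perimeter, and that the identification of the caps $\{t<t_0\}$ with genuine geodesic balls is justified directly from the cone metric near the vertices.
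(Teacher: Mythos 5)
Your proposal follows essentially the same route as the paper: fiberwise symmetrization of a region into $S^{n+1}$ (viewed as the spherical cone over $S^n$), the Levy--Gromov inequality to compare slices, the warped-product Ros comparison to show symmetrization does not increase perimeter, and the observation that geodesic balls about the vertices of $X$ reproduce exactly the isoperimetric profile of the round sphere. The only difference is one of completeness rather than of method: the paper actually proves the warped-product Ros comparison via Minkowski content and enlargement inclusions (its Lemmas 2.1--2.3), which is precisely the step you cite from \cite{Ros} and \cite{Morgan2} and flag as the main obstacle.
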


But now, since the spherical cone over $(M,g)$ 
is conformal to $(M\times \re ,
g+ dt^2 )$ we can use the previous result 
and symmetrization of a function with respect to the
geodesic balls centered at a vertex to prove:

\begin{Theorem} Let $(M,g)$ be a closed Riemannian manifold of  
positive Ricci curvature, $Ricci(g) \geq (n-1)g$ and volume $V$. 
Then 

$$Y(M\times \re ,[g+dt^2 ]) \geq 
(V/V_n )^{\frac{2}{n+1}} \  Y_{n+1} .$$
\end{Theorem}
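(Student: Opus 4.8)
The plan is to exploit the conformal equivalence between the punctured spherical cone $(X-\{S,N\},{\bf g})$ and $(M\times\re,g+dt^2)$ together with the isoperimetric information of Theorem 1.1, via a radial symmetrization toward a vertex. Write $N=n+1$. First I would record that the conformal factor is $\sin^2 t$: setting $s=\log\tan(t/2)$ turns ${\bf g}=\sin^2(t)g+dt^2$ into $\sin^2(t)(g+ds^2)$, so ${\bf g}$ is pointwise conformal to $g+ds^2$ on $M\times\re$. Because the Yamabe functional is conformally invariant, a compactly supported test function $u$ for $Y(M\times\re,[g+dt^2])$ corresponds to a function $w$ on $X$ (vanishing near the vertices) with the same value of the functional computed with respect to ${\bf g}$. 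Since $Ricci(g)\geq(n-1)g$ forces $Ricci({\bf g})\geq n{\bf g}=(N-1){\bf g}$, the scalar curvature obeys ${\bf s}_{\bf g}\geq N(N-1)$, so I may bound the potential term below by $N(N-1)w^2$ and reduce to estimating $\int_X(4a_{n+1}\|\nabla w\|^2_{\bf g}+N(N-1)w^2)\,dvol_{\bf g}$ over $\|w\|_{p_{n+1},{\bf g}}^2$.

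The model is the spherical cone over $(S^n,g_0)$, which is precisely the round sphere $(S^{N},g_0)$, and whose geodesic balls around a vertex are exactly the isoperimetric caps of $S^{N}$. The key geometric fact, which I would extract from Theorem 1.1, is that the \emph{normalized} isoperimetric profiles of $X$ and of $S^{N}$ coincide. Indeed a geodesic ball of radius $r$ about a vertex in any such cone is $M\times(0,r)$, with volume $V\int_0^r\sin^n\tau\,d\tau$ and boundary area $V\sin^n(r)$; the volume factor cancels in the ratio, so both normalized profiles equal $\sin^n(r(\beta))/c_n$ with $c_n=\int_0^\pi\sin^n\tau\,d\tau$ and $r(\beta)$ fixed by the volume fraction $\beta$. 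Moreover $Vol(X)/V_{N}=V/V_n$. I would then define, for $w\geq 0$ (without loss of generality, since passing to $|w|$ does not change the functional), its radial rearrangement $\bar w$ on $S^{N}$ by requiring the superlevel sets $\{\bar w>\ell\}$ to be geodesic balls about a vertex with the same volume \emph{fraction} as $\{w>\ell\}$ in $X$.

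By the layer-cake formula this rearrangement scales $L^q$ norms uniformly: $\int_{S^{N}}\bar w^{q}=(V_n/V)\int_X w^{q}$ for every $q$, in particular for $q=2$ and $q=p_{N}$. For the Dirichlet term I would run the standard coarea/Cauchy--Schwarz computation (the P\'olya--Szeg\H{o} inequality): equality of the normalized profiles yields $\int_{S^{N}}\|\nabla\bar w\|^2\leq(V_n/V)\int_X\|\nabla w\|^2$. Combining these, the numerator of the Yamabe functional for $w$ on $X$ dominates $(V/V_n)$ times the Yamabe numerator for $\bar w$ on $S^{N}$, while $\|w\|_{p_{N}}^2=(V/V_n)^{2/p_{N}}\|\bar w\|_{p_{N}}^2$. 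Since the round metric on $S^{N}$ has constant scalar curvature $N(N-1)$, the rearranged quotient is exactly the Yamabe quotient of $\bar w$ on $(S^{N},g_0)$, which is $\geq Y_{N}$. Collecting the powers of $V/V_n$ produces the factor $(V/V_n)^{1-2/p_{N}}=(V/V_n)^{2/N}=(V/V_n)^{2/(n+1)}$, and taking the infimum over $u$ (equivalently $w$) gives the claim.

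The main obstacle is the symmetrization step and its justification: one must carry out the coarea estimate and the Cauchy--Schwarz step rigorously and, crucially, invoke the isoperimetric characterization of Theorem 1.1 to guarantee that rearranging into vertex-balls does not increase the Dirichlet energy. Secondary technical points I would address are the admissibility of $\bar w$ as a Sobolev test function on $S^{N}$, the exact correspondence of test functions across the conformal change, and the harmless role of the two vertices, which have zero capacity so that compactly supported functions on $M\times\re$ suffice and the rearranged function extends across a vertex.
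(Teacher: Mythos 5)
Your proposal is correct and follows essentially the same route as the paper: conformal equivalence of $(M\times\re,\,g+dt^2)$ with the punctured spherical cone, the bound ${\bf s}_{\bf g}\geq n(n+1)$ from $Ricci({\bf g})\geq n{\bf g}$, and a vertex-centered rearrangement whose Dirichlet-energy comparison rests on Theorem 1.1 via the coarea formula and Cauchy--Schwarz. The only (cosmetic) difference is that you rearrange directly onto $S^{n+1}$ using volume fractions, whereas the paper first symmetrizes within the cone $X$ itself and then transfers the radial function to $S^{n+1}$ with the volume scaling $V_n/V$; the two compositions coincide.
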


\vspace{.2cm}

As we mentioned before one of the differences between the positive
and non-positive cases in the study of the Yamabe constant is
the non-uniqueness of constant scalar curvature metrics on
a conformal class with positive Yamabe constant. And the simplest
family of examples of non-uniqueness comes from Riemannian 
products. If $(M,g)$ and $(N^n ,h)$ are closed Riemannian manifolds
of constant scalar curvature and ${\bf s}_g$ is positive then
for small $\delta >0$, $\delta g + h$ is a constant scalar
curvature metric on $M \times  N$ which cannot be a Yamabe
metric. If $(M,g)$ is Einstein and $Y(M)=Y(M,[g])$ it seems
reasonable that $Y(M\times N)= \lim_{\delta \rightarrow 0} 
Y(M\times N ,[ \delta g + h ])$.
Moreover as it is shown in \cite{Akutagawa}

$$ \lim Y(M\times N , [\delta g + h ]) =Y(M\times \re^n,[ g+ dt^2 ]).$$

The only case which is well understood is when $M=S^n$ and $N=S^1$.
Here every Yamabe function is a function of the $S^1$-factor
\cite{Schoen2} and the Yamabe function for $(S^n \times \re , g_0 +
dt^2 )$ is the factor which makes $S^n\times \re$ conformal to
$S^{n+1} -\{ S, N \}$. It seems possible that under
certain conditions on $(M,g)$ the Yamabe functions of 
$(M \times \re^n , g+dt^2 )$ depend only on the second 
variable. The best case scenario would be that this is true
if $g$ is a Yamabe metric but it seems more attainable the
case when $g$ is Einstein. It is a corollary to the previous
theorem that this is actually true in the case $n=1$. Namely,
using the notation (as in \cite{Akutagawa}) 
$Y_N (M\times N , g +h)$
to denote the infimum of the $(g+h)$-Yamabe functional restricted
to functions of the $N$-factor we have:

\begin{Corollary} Let $(M^n,g)$ be a closed positive Einstein manifold 
with Ricci curvature $Ricci(g)=(n-1)g$. Then

$$Y(M\times \re , [g+ dt^2 ])=Y_{\re}(M\times \re , g+ dt^2 )=
{\left( \frac{V}{V_n} \right) }^{\frac{2}{n+1}} \ Y_{n+1}.$$ 

\end{Corollary}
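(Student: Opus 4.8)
The plan is to prove both equalities by sandwiching $Y(M\times\re,[g+dt^2])$ between the lower bound supplied by the preceding theorem and a direct evaluation of $Y_{\re}$. The first, trivial, ingredient is the general inequality
\[
Y(M\times\re,[g+dt^2]) \;\le\; Y_{\re}(M\times\re, g+dt^2),
\]
which holds because the right-hand side is the infimum of the very same Yamabe functional taken over the strictly smaller class of functions depending only on the $\re$-factor. It therefore suffices to compute $Y_{\re}$ explicitly and check that it coincides with the lower bound of the preceding theorem.

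To compute $Y_{\re}$, I restrict the Yamabe functional of the $(n+1)$-dimensional manifold $(M\times\re, g+dt^2)$ to functions $f=f(t)$. Since $g$ is Einstein with $Ricci(g)=(n-1)g$, its scalar curvature is the constant $n(n-1)$, and because the $\re$-direction is flat the product metric again has ${\bf s}_{g+dt^2}=n(n-1)$. Separating variables, each of the three integrals in the functional factors as $Vol(M,g)=V$ times a one-dimensional integral in $t$; collecting the powers of $V$ and using $1-2/p_{n+1}=2/(n+1)$, the restricted functional equals $V^{2/(n+1)}$ times the one-dimensional functional
\[
Q(f)=\frac{\int_{\re}\bigl(4a_{n+1}(f')^2 + n(n-1)f^2\bigr)\,dt}{\bigl(\int_{\re}|f|^{p_{n+1}}\,dt\bigr)^{2/p_{n+1}}}.
\]
The decisive observation is that $Q$ does not depend on $M$ at all, only on $n$: the Einstein normalization fixes the scalar curvature to the same value $n(n-1)$ as on the round sphere, so the reduced problem is universal. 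Hence $Y_{\re}(M\times\re, g+dt^2)=V^{2/(n+1)}\,\inf_f Q(f)$, with $\inf_f Q$ a constant independent of $M$.

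To pin down this constant I specialize to $M=S^n$ with the round metric $g_0$, which is Einstein with $Ricci(g_0)=(n-1)g_0$ and has volume $V_n$. The identical reduction gives $Y_{\re}(S^n\times\re, g_0+dt^2)=V_n^{2/(n+1)}\inf_f Q$. On the other hand, by the classical facts recalled in the introduction (\cite{Schoen2, Kobayashi, Akutagawa}), $Y(S^n\times\re,[g_0+dt^2])$ equals $Y_{n+1}$ and is realized by the conformal factor between $S^n\times\re$ and $S^{n+1}-\{S,N\}$, which depends only on the $\re$-factor; thus $Y_{\re}(S^n\times\re, g_0+dt^2)=Y_{n+1}$. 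Comparing the two expressions yields $\inf_f Q=Y_{n+1}/V_n^{2/(n+1)}$, and substituting back gives $Y_{\re}(M\times\re, g+dt^2)=(V/V_n)^{2/(n+1)}Y_{n+1}$.

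Stringing together the lower bound of the preceding theorem and the upper bound just obtained,
\[
(V/V_n)^{2/(n+1)}Y_{n+1} \;\le\; Y(M\times\re,[g+dt^2]) \;\le\; Y_{\re}(M\times\re, g+dt^2) \;=\; (V/V_n)^{2/(n+1)}Y_{n+1},
\]
forces equality throughout, which is exactly the corollary. The only delicate point is the identification of the reduced one-dimensional problem with that of the round sphere; this rests entirely on the Einstein condition forcing the constant scalar curvature $n(n-1)$ and on the classical fact that the cylindrical Yamabe constant of $S^n$ is attained by a function of $t$ alone. A minor technical caveat, dispatched by a standard density argument, is that on the non-compact cylinder the infima are taken over compactly supported functions while the extremal profile for $Q$ is the non-compactly supported bubble; approximating it by cutoffs leaves the infimum unchanged.
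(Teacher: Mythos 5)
Your proposal is correct and follows essentially the same route as the paper: the trivial inequality $Y(M\times \re,[g+dt^2])\leq Y_{\re}(M\times \re, g+dt^2)$, separation of variables to reduce $Y_{\re}$ to a universal one-dimensional functional times $V^{2/(n+1)}$ (using the Einstein condition to fix ${\bf s}_g=n(n-1)$), identification of the one-dimensional constant as $Y_{n+1}V_n^{-2/(n+1)}$ via the round sphere case, and the sandwich with the lower bound of Theorem 1.2. Your explicit specialization to $M=S^n$ and the remark about approximating the non-compactly supported extremal by cutoffs merely spell out what the paper compresses into ``it is known to be equal to $Y_{n+1}V_n^{-2/(n+1)}$.''
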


\vspace{.3cm}

As $Y(M\times \re , [g+ dt^2 ]) = \lim_{T\rightarrow \infty }
Y(M\times S^1 ,[g+T dt^2 ])$ it also follows from Theorem 1.2
that:

\begin{Corollary} If $(M^n ,g)$ is a closed Einstein manifold
with $Ricci(g) = (n-1)g$ and volume $V$ then

$$Y(M\times S^1) \geq (V/V_n )^{\frac{2}{n+1}} \  Y_{n+1} .$$

\end{Corollary}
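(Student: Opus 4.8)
The plan is to deduce this purely formally from Theorem 1.2 and the limit identity stated just above the corollary; no new analysis is needed, since all of the geometric work has already been carried out. The essential observation is that the Yamabe invariant $Y(M\times S^1)$ is, by definition, a supremum over all conformal classes on $M\times S^1$, so it dominates the Yamabe constant of any particular conformal class, and in particular the limit of such constants along a one-parameter family of product metrics.

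Concretely, I would proceed as follows. First, since $Y(M\times S^1)=\sup_h Y(M\times S^1,[h])$, for every $T>0$ the product metric $g+T\,dt^2$ furnishes an admissible conformal class, whence
$$Y(M\times S^1)\geq Y(M\times S^1,[g+T\,dt^2]).$$
Letting $T\to\infty$ and invoking the identity $\lim_{T\to\infty}Y(M\times S^1,[g+T\,dt^2])=Y(M\times \re,[g+dt^2])$ recorded before the statement (coming from \cite{Akutagawa}), I obtain
$$Y(M\times S^1)\geq Y(M\times \re,[g+dt^2]).$$
Finally, because $g$ is Einstein with $Ricci(g)=(n-1)g$, it satisfies in particular the hypothesis $Ricci(g)\geq(n-1)g$ of Theorem 1.2, so that
$$Y(M\times \re,[g+dt^2])\geq (V/V_n)^{\frac{2}{n+1}}\,Y_{n+1}.$$
Chaining the two inequalities yields the claim.

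The only point requiring a moment of care—and it is hardly an obstacle—is the passage from ``$\geq$ each member of the family'' to ``$\geq$ the limit of the family'': this is legitimate precisely because the limit is known to exist (it equals $Y(M\times \re,[g+dt^2])$), so the supremum defining $Y(M\times S^1)$ bounds it from above. All the genuine difficulty sits upstream, in the isoperimetric comparison of Theorem 1.1 and the symmetrization argument behind Theorem 1.2, together with Akutagawa's convergence result; once those are in hand the corollary is immediate.
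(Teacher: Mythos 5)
Your proposal is correct and is exactly the paper's argument: the paper deduces the corollary from Theorem 1.2 combined with the limit identity $Y(M\times \re,[g+dt^2])=\lim_{T\to\infty}Y(M\times S^1,[g+T\,dt^2])$ stated immediately before it, with the supremum defining $Y(M\times S^1)$ dominating each term of the family and hence its limit. Your side remark that only $Ricci(g)\geq (n-1)g$ (not the full Einstein condition) is needed for this particular inequality is also accurate.
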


\vspace{.3cm}

So for example using the product metric we get

$$Y(S^2 \times S^2 \times S^1 )\geq  {\left(\frac{2}{3}
  \right)}^{(2/5)}  \ Y_5  $$

\noindent
and using the Fubini-Study metric we get

$$Y({\bf CP}^2 \times S^1 ) \geq {\left(\frac{3}{4} \right)}^{(2/5)} 
\ Y_5 .$$

\vspace{.4cm}

{\it Acknowledgements:} The author would like to  thank 
Manuel Ritor\'{e}, Kazuo Akutagawa and Frank Morgan
for several useful comments on the first drafts of 
this manuscript.

\section{Isoperimetric regions in spherical cones}

As we mentioned in the introduction, the isoperimetric
problem for spherical cones (over manifolds with 
Ricci curvature $\geq n-1$)  is understood using
the Levy-Gromov isoperimetric inequality 
(to compare the isoperimetric functions of
$M$ and of $S^n$) and the Ros Product Theorem for  warped products
(to compare then the isoperimetric functions of 
the spherical cone over $M$ to the isoperimetric function
of $S^{n+1}$). 
See for example section 3 of \cite{Morgan2}
(in particular {\bf 3.2} and the remark after it). For the
reader familiar with isoperimetric problems, this should be
enough to understand Theorem 1.1. In this section, for the
convenience of the reader, we will
give a brief outline on these issues. We will mostly 
discuss and follow section 3 of \cite{Ros} and ideas
in \cite{Morgan, Montiel} which we think might be useful in
dealing with other problems arising from the study of Yamabe 
constants.

Let $(M^n ,g)$ be a closed Riemannian manifold
of volume $V$ and Ricci curvature $Ricci(g) \geq (n-1)g$. 
We will
consider $(X^{n+1}, \bf{g}) $ where as a topological space $X$ is the 
suspension of $M$ ($X=M\times [0,\pi ]$ with $M\times \{ 0 \}$ and
$M\times \{ \pi \}$ identified to points $S$ and $N$) 
and $\bf{g}$ $ =\sin^2 (t) \ g \ +
dt^2$. Of course $X$ is not a manifold (except when $M$ is $S^n$) and
$\bf{g} $ is a Riemannian metric only on $X-\{ S,N \}$. 

The following is a standard result in geometric measure theory.

$\bf{Theorem:}$  For any positive number $r< Vol(x)$ there exists 
an isoperimetric open subset $U$ of $X$ of volume $r$. Moreover
$\partial U$ is a smooth stable constant mean curvature
hypersurface of $X$ except for a singular piece $\partial_1 U$
which consists of (possibly)
$S$, $N$, and a subset of codimension at least 7.

Let us call $\partial_0 U$ the regular part of $\partial U$,
$\partial_0 U= \partial U - \partial_1 U$. Let
$X_t$, $t\in (-\varepsilon ,\varepsilon )$, 
be a variation of  $\partial_0 U$ such that the 
volume of the enclosed region $U_t$ remains constant. 
Let $\lambda (t)$ be the area of $X_t$. Then $\lambda '(0) =0$
and $\lambda ''(0) \geq 0$. The first condition is satisfied
by hypersurfaces of constant mean curvature and the ones 
satisfying the second condition are called ${\it stable}$.
If $N$ denotes a  normal 
vector field to the hypersurface then variations are obtained 
by picking a function $h$ with compact support on $\partial_0 U$ and
moving $\partial_0 U$ in the direction of  $h \ N$. Then
we have that if the mean of $h$  on $\partial_0 U$
is 0 then  $\lambda_h '(0) =0$
 $\lambda_h ''(0) \geq 0$. This last condition is written as

$$Q(h,h)=-\int_{\partial_0 U} h(\Delta h + (Ricci (N,N) +
\sigma^2 )h ) dvol(\partial_0 U) \geq 0.$$

\noindent
Here we consider $\partial_0 U$ as a Riemannian manifold
(with the induced metric) and  use the corresponding Laplacian
and volume element. $\sigma^2$ is the square of the norm of the second
fundamental form. 
This was worked out by J. L. Barbosa, M. do Carmo and 
J. Eschenburg in \cite{Barbosa,
doCarmo}. As we said before, the function $h$ 
should apriori have compact support 
in $\partial_0 U$ but as shown by F. Morgan and M. Ritor\'{e} 
\cite[Lemma 3.3]{Morgan} it is enough that $h$ is bounded
and $h\in L^2 (\partial_0 U)$. This is important in order to study
stable constant mean curvature surfaces on a space like $X$ because
$X$ admits what is called a ${\it conformal}$ vector field $V=
\sin (t) \partial /\partial t$ and the function $h$ one wants to
consider is $h=div (V-{\bf g}(V,N) \ N )$ where $N$ is the unit
normal to the hypersurface (and then $h$ is the divergence of
the tangencial part of $V$). This has been used for instance in 
\cite{Montiel,Morgan} to classify stable constant mean curvature
hypersurfaces in Riemannian manifolds with a conformal vector field.
When the hypersurface is smooth this function $h$ has mean 0 by 
the divergence theorem and one can apply the stability condition. 
But when the hypersurface has singularities one would apriori need 
the function $h$ to have compact support on the regular part. This
was done by   F. Morgan and M. Ritor\'{e}  in 
\cite[Lemma 3.3]{Morgan}.

We want to prove that the geodesic balls around $S$ are
isoperimetric. One could try to apply the techniques of
Morgan and Ritor\'{e} in \cite{Morgan} and see that they are 
the only stable constant mean 
curvature hypersurfaces in $X$. This should be possible, and 
actually it might be necessary to deal with isoperimetric regions 
of more general singular spaces that appear naturally in the study of 
Yamabe constants of Riemannian products.
But in this case we will instead
take a more direct approach using the Levy-Gromov 
isoperimetric inequality \cite{Gromov} and Ros Product Theorem 
\cite{Ros}.

\vspace{.3cm}

The sketch of the proof is as follows: First one has to note that 
geodesic balls centered at the vertices {\it produce} the same 
isoperimetric function as the one of the round sphere. Therefore
to prove that geodesic balls around the vertices are isoperimetric 
is equivalent to prove that the isoperimetric function of ${\bf g}$ 
is bounded from below by the isoperimetric function of $g_0$. To
do this, given any open subset $U$ of $X$ one considers 
its symmetrization
$U^s \subset S^{n+1}$, so the the {\it slices} of $U^s$ are geodesic
balls with the same normalized volumes as the slices of $U$.  Then
by the Levy-Gromov isoperimetric inequality we can compare the
normalized areas of the boundaries of the slices. We have to 
prove that the normalized area of $\partial U^s$ is at most the
normalized area of $\partial U$. 
This follows from 
the warped product version of \cite[Proposition 3.6]{Ros}. We will
give an outline following Ros' proof for the Riemannian product case. 
We will use the notion of Minkowski
content. This is the bulk of the proof and we will divide it into 
Lemma 2.1, Lemma 2.2 and Lemma 2.3.

\vspace{.3cm}

{\it Proof of Theorem 1.1 :} 
Let $U\subset X$ be a closed  subset.
For any $t\in (0,\pi )$ let 

$$U_t =U \cap (M\times \{ t \} ) .$$

Fix any point $E\in S^n$ and let $(U^s )_t$ be the geodesic ball
centered at $E$ with volume 

$$Vol((U^s )_t , g_0 ) = \frac{V_n}{V} \ Vol(U_t ,g).$$ 

\noindent
(recall that $V=Vol(M,g)$ and $V_n = Vol(S^n ,g_0 )$).
Let $U^s
\subset S^{n+1}$ be the corresponding subset (i.e. we consider
$S^{n+1} -\{ S,N \}$ as $S^n \times (0,\pi )$ and $U^s$ is
such that $U^s \cap (S^n \times \{ t \}) $ =$(U^s )_t$.
One might add
$S$ and/or $N$ to make $U^s$ closed and connected). Note
that one can write $(U^s )_t = (U_t )^s = U_t^s$ as long as there
is no confusion (or no difference) on whether  we are considering 
it as a subset of $S^n$ or as a subset of $S^{n+1}$.

Now 

$$Vol(U)=\int_0^{\pi} \sin^n (t) \ Vol(U_t ,g) \ dt $$

$$= \frac{V}{V_n} \int_0^{\pi} \sin^n (t) \ Vol((U^s )_t ,g_0 ) \ dt 
= \frac{V}{V_n} Vol(U^s ,g_0 ).$$

Also if $B(r) =M\times [0,r]$ (the geodesic ball of radius
$r$ centered at the vertex at 0) then

$$Vol(B(r))=\int_0^r \sin^n (t) V dt = \frac{V}{V_n} 
\int_0^r \sin^n (t) V_n dt = \frac{V}{V_n} Vol (B_0 (r))  \ \ (1)$$

\noindent
where $B_0 (r)$ is the geodesic ball of radius $r$ in the
round sphere. And 

$$Vol(\partial B(r))=\sin^n (r) V =\frac{V}{V_n} 
Vol(\partial B_0 (r)) \ \ \ \ \ \ \ \ \ \ \ \ \ \ 
\ \ \ \ \ \ \ \ \ \ \ \ \ \ \ \ \ \ \ \ (2).$$

Formulas (1) and (2) tell us that the geodesic balls around the 
vertices in $X$ produce the same isoperimetric function as 
the round metric $g_0$. Therefore given any open subset $U \subset
X$ we want to compare the area of $\partial U$ with the area
of the boundary  
of the geodesic ball in $S^{n+1}$ with the same normalized volume
as $U$.

\vspace{.3cm}

Given a closed set $W$ let $B(W,r)$ be the set of points at distance
at most $r$ from $W$. Then one considers the {\it Minkowski content}
of $W$, 

$$\mu^+ (W) = \liminf \frac{Vol (B(W,r) ) -Vol(W)}{r}.$$

\noindent 
If $W$ is a smooth submanifold with boundary then 
$\mu^+ (W) = Vol (\partial W)$. And this is still true if the
boundary has singularities of codimension $\geq 2$ (and finite 
codimension 1 Hausdorff measure).
 
The Riemannian measure on $(S^n ,g_0 )$, normalized to be a
probability measure is what is called a {\it model measure}:
if $D^t$, $t\in (0,1)$ is the family of geodesic balls 
(with volume $Vol(D^t )=t$) centered at some fixed point then  
they are 
isoperimetric regions which are ordered by volume and such 
that for any $t$, $ B(D^t ,r) =D^{t'}$ for some $t'$. 
See \cite[Section 3.2]{Ros}. The following result follows 
directly from the 
Levy-Gromov isoperimetric inequality \cite[Appendix C]{Gromov}
and \cite[Proposition 3.5]{Ros} (see the lemma in 
\cite[page 77]{Morgan3} for a more elementary proof and point of view
on \cite[Proposition 3.5]{Ros}).

\begin{Lemma}: Let $(M,g)$ be a closed Riemannian manifold
of volume $V$ and Ricci curvature $Ricci(g) \geq (n-1) g$. 
For any nonempty closed subset $\Omega \subset M$ and
any $r\geq 0$ if $B_{\Omega}$ is a geodesic ball in
$(S^n , g_0 )$ with volume $Vol(B_{\Omega})=(V_n /V)
Vol(\Omega )$ then $Vol(B(B_{\Omega} ,r))  \leq 
(V_n /V) Vol(B(\Omega ,r))$.
\end{Lemma}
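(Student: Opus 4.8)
The plan is to reduce the statement to the classical Levy--Gromov isoperimetric inequality by reinterpreting the claim in terms of the isoperimetric function of $(M,g)$. The key observation is that the Minkowski content of a neighborhood is controlled by the isoperimetric function, and the Levy--Gromov inequality precisely compares the isoperimetric function of $(M,g)$ with that of the round sphere $(S^n,g_0)$ of the same dimension, under the hypothesis $Ricci(g)\geq (n-1)g$.

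First I would recall the precise content of the Levy--Gromov inequality: if $Ricci(g)\geq (n-1)g$, then for every $\beta\in(0,1)$ one has $I_g(\beta)\geq I_{g_0}(\beta)$, where $I_g$ and $I_{g_0}$ are the normalized isoperimetric functions. Equivalently, if a region $\Omega\subset M$ and a geodesic ball $B_\Omega\subset S^n$ have the same normalized volume, then the normalized boundary area of $B_\Omega$ is at most that of $\Omega$. The statement to be proved is the ``thickened'' version of this, comparing $Vol(B(B_\Omega,r))$ with $Vol(B(\Omega,r))$ for a finite $r$, rather than just the infinitesimal boundary areas.

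The strategy is to pass from the infinitesimal comparison to the finite comparison by integrating along the family of parallel neighborhoods. Set $v(r)=(V_n/V)\,Vol(B(\Omega,r))$ and let $w(r)=Vol(B(B_\Omega,r))$. Since geodesic balls in $S^n$ form a model measure (their neighborhoods are again geodesic balls, ordered by volume), $w(r)$ is exactly the normalized volume of the geodesic ball whose radius has grown by $r$. I would then argue that the functions $v$ and $w$ satisfy, for almost every $r$, the differential inequalities $v'(r)\geq \Phi(v(r))$ and $w'(r)=\Phi(w(r))$, where $\Phi$ is the isoperimetric profile of the round sphere expressed as a function of enclosed normalized volume (here the equality for $w$ uses that the sphere neighborhoods are themselves the extremal geodesic balls, and the lower bound for $v$ uses Levy--Gromov applied at each radius to the set $B(\Omega,r)$, whose normalized boundary Minkowski content dominates $\Phi(v(r))$). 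Since $v(0)=w(0)$ by the choice of $B_\Omega$, a standard comparison argument for ODEs (Gronwall-type) yields $v(r)\geq w(r)$ for all $r\geq 0$, which is exactly the desired inequality $Vol(B(B_\Omega,r))\leq (V_n/V)\,Vol(B(\Omega,r))$.

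The main obstacle will be making the differential-inequality argument rigorous at the level of regularity actually available: the neighborhood volumes $v(r)$ and $w(r)$ need not be smooth, so I would work with one-sided derivatives and the lower Minkowski content $\mu^+$, using that $\frac{d}{dr}Vol(B(\Omega,r))$ is dominated below by $\mu^+(B(\Omega,r))$ for almost every $r$ and controlling the possibly-singular set where this fails. Identifying the correct monotone profile $\Phi$ and verifying that the comparison respects the normalization $(V_n/V)$ throughout is where care is needed; once the ODE comparison is set up correctly, the conclusion is immediate. For the full details of this argument I would follow \cite[Proposition 3.5]{Ros} together with \cite[Appendix C]{Gromov}, as indicated, and the more elementary exposition in \cite[page 77]{Morgan3}.
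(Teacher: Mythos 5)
Your proposal is correct and is essentially the paper's own approach: the paper simply normalizes both Riemannian measures to probability measures and observes that the statement is then exactly Ros's Proposition 3.5 combined with the Levy--Gromov inequality, which is the same reduction you make, your ODE-comparison argument being precisely the standard proof of that cited proposition (cf.\ Morgan's lemma). The one technical point your ``Gronwall-type'' step glosses over --- the spherical profile $\Phi$ is not Lipschitz at normalized volumes $0$ and $1$, so the comparison needs care e.g.\ in the degenerate case $Vol(\Omega)=0$ --- is handled in the references to which both you and the paper defer.
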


\begin{proof} Given any closed Riemannian manifold $(M,g)$, 
dividing the
Riemannian measure by the volume one obtains a probability 
measure which we will denote $\mu_g$. 
As we said before, the round metric on the sphere
gives a model measure $\mu_{g_0}$. On the other hand the Levy-Gromov
isoperimetric inequality \cite{Gromov}
says that $I_{\mu_g} \geq I_{\mu_{g_0}}$. 
The definition of $B_{\Omega}$ says that $\mu_g (\Omega )=\mu_{g_0}
(B_{\Omega})$ and what we want to prove is that $\mu_g (B(\Omega ,r))
\geq \mu_{g_0}
(B(B_{\Omega} ,r)  )$ .
Therefore the
statement of the lemma is precisely \cite[Proposition 3.5]{Ros}.

\end{proof}

Fix a positive constant $\lambda$. Note that the previous lemma
remains unchanged if we replace $g$ and $g_0$ by $\lambda g$
and $\lambda g_0$: the correspondence $\Omega \rightarrow
B_{\Omega}$ is the same and $\mu_{\lambda g} = \mu_g$.

\begin{Lemma} For any $t_0 \in (0,\pi )$ 
$B((U^s )_{t_0} ,r) \subset (B(U_{t_0} ,r ))^s $.
\end{Lemma}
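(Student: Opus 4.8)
The plan is to reduce the asserted inclusion to the volume inequality of Lemma 2.1. First I would fix $t_0 \in (0,\pi)$ and set $\lambda = \sin^2 (t_0)$, so that the metric induced on the slice $M \times \{ t_0 \}$ is $\lambda g$ and the metric induced on $S^n \times \{ t_0 \}$ is $\lambda g_0$; all the $r$-neighborhoods appearing in the statement are to be understood with respect to these induced slice metrics. Writing $\Omega = U_{t_0}$, by definition $(U^s)_{t_0} = \Omega^s$ is the geodesic ball $B_\Omega$ in $S^n$ centered at the fixed point $E$ whose normalized volume equals that of $\Omega$.

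The key observation is that both sides of the claimed inclusion are geodesic balls in $(S^n, \lambda g_0)$ centered at the same point $E$. Indeed, the left-hand side $B((U^s)_{t_0}, r) = B(B_\Omega, r)$ is the metric $r$-neighborhood of a ball centered at $E$, hence again a ball centered at $E$ of larger radius; and the right-hand side $(B(U_{t_0}, r))^s$ is, by the very definition of symmetrization, a geodesic ball centered at $E$. Since two geodesic balls centered at the same point are ordered by inclusion according to their volumes, the inclusion $B((U^s)_{t_0}, r) \subset (B(U_{t_0}, r))^s$ is equivalent to the volume inequality
$$Vol(B(B_\Omega, r), \lambda g_0) \leq Vol((B(U_{t_0}, r))^s, \lambda g_0) = \frac{V_n}{V} \ Vol(B(U_{t_0}, r), \lambda g),$$
where the last equality is the defining property of the symmetrization.

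But this is exactly the conclusion of Lemma 2.1 applied to $\Omega$ and $r$ with the metrics $g$ and $g_0$ replaced by $\lambda g$ and $\lambda g_0$. By the remark following Lemma 2.1 that replacement leaves the lemma valid (the correspondence $\Omega \mapsto B_\Omega$ is unchanged and the normalized measures satisfy $\mu_{\lambda g} = \mu_g$), so the inequality holds and the inclusion follows. The only point requiring care is precisely this scaling: one must read every neighborhood in the statement as taken in the induced slice metrics $\lambda g$ and $\lambda g_0$ rather than in $g$ and $g_0$, and then invoke the scale invariance of Lemma 2.1. Once this is noted, the lemma is a direct repackaging of Lemma 2.1 as an inclusion of concentric balls, and there is no further obstacle.
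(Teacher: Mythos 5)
Your proof establishes a different and strictly weaker statement than the lemma actually asserts, because you have misread where the $r$-neighborhoods live. In the paper, $B(W,r)$ is defined (just before the Minkowski content is introduced) as the set of points at distance at most $r$ from $W$ in the ambient space: $B(U_{t_0},r)$ is a neighborhood in the spherical cone $X$, and $B((U^s)_{t_0},r)$ is a neighborhood in $S^{n+1}$. These sets spread across all slices $t$ with $|t-t_0|<r$, and may even contain balls about the vertices when $r$ exceeds the distance from $t_0$ to $0$ or $\pi$; they are not subsets of the slice $t=t_0$, and the left-hand side is not a geodesic ball of $(S^n,\lambda g_0)$. This ambient reading is forced by the way the lemma is used in Lemma 2.3: there $B(U^s,r)$, the ambient neighborhood in $S^{n+1}$, is written as the closure of $\bigcup_{t_0\in(0,\pi)} B(U^s_{t_0},r)$, and the lemma is invoked to conclude $B(U^s,r)\subset (B(U,r))^s$, which is what feeds into the Minkowski-content estimate. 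With in-slice neighborhoods the union above would miss most of $B(U^s,r)$ and that argument would collapse.

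What your argument does prove is precisely the $t=t_0$ slice of the correct statement; the missing content of the lemma is the comparison of the slices at $t\neq t_0$ (and at the vertices). The paper handles these by a warped-product observation your proposal never uses: the distance in $X$ between $(x,t_0)$ and $(y,t)$ depends only on $t_0$, $t$ and the base distance between $x$ and $y$, through a formula (involving $\sin$) that is identical for $M$ and for $S^n$. Consequently, for each $t$ the slice $(B(U_{t_0},r))_t$, viewed as a subset of $M$, equals an intrinsic neighborhood $B(U_{t_0},\rho)$ for some $\rho=\rho(r,t,t_0)$, and likewise $(B((U^s)_{t_0},r))_t=B(U^s_{t_0},\rho)$ in $S^n$ with the \emph{same} $\rho$. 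Only after this reduction do the two $t$-slices become concentric balls whose volumes can be compared via Lemma 2.1 together with the scaling remark --- which is essentially the step you carried out. So your volume-comparison mechanism is the right final step, but the essential geometric input --- that ambient neighborhoods slice into intrinsic neighborhoods with a manifold-independent radius --- is absent, and without it the lemma as stated, and as needed later, is not proved.
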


\begin{proof}  First note that the distance from a point 
$(x,t) \in X$ to a vertex depends only on $t$ and not on $x$ 
(or even on $X$). Therefore if $r$ is greater than the
distance $\delta$ between $t_0$ and $0$ or $\pi$ 
then both sets in the lemma
will contain a geodesic ball of radius $r-\delta$ around the 
corresponding vertex. 

Also observe that the distance between points $(x,t_0 )$ and
$(y,t)$ depends only on the distance between $x$ and $y$
(and $t$, $t_0$, and the function in the warped product, 
which in this case is $\sin$) but not on $x, y$ or $X$.
In particular for any $t$ so that $|t-t_0 |<r$,  
$(B((U^s )_{t_0} ,r) )_t$ is a geodesic ball. 

We have to prove that for any $t$ 

$$(B((U^s )_{t_0} ,r) )_t \subset ((B(U_{t_0} ,r ))^s )_t.$$

\noindent
But since they are both geodesic balls centered at the same point
it is enough to prove that the volume of the subset on the left is
less than or equal to the volume of the subset on the right. 
By the definition of symmetrization the normalized volume of
$ ((B(U_{t_0} ,r ))^s )_t$ is equal to the normalized volume of 
$(B(U_{t_0} ,r ))_t$. But from the previous comment there exist
$\rho >0$ such that, considered as subsets of $M$,  

$$(B(U_{t_0} ,r ))_t = B(U_{t_0} ,\rho )$$

\noindent
and, as subsets of $S^n$,

$$(B((U^s )_{t_0} ,r) )_t =B(U^s_{t_0} ,\rho ).$$

The lemma then follows from Lemma 2.1 (and the comments after it). 

\end{proof}

Now for any closed subset $U\subset X$ let $B_U$ be a  
geodesic ball in $(S^{n+1} ,g_0 )$ with volume 
$Vol(B_U ,g_0 )= (V_n /V)
Vol(U,{\bf g})$. Since geodesic balls in round spheres are isoperimetric
(and $Vol(B_U ,g_0 )=Vol(U^s ,g_0 )$)
it follows that $Vol(\partial B_U )\leq \mu^+ (U^s )$.

\begin{Lemma} Given any closed set $U\subset X$, $\mu^+(U) 
\geq (V/V_n ) Vol(\partial B_U )$.
\end{Lemma}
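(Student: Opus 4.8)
The plan is to reduce Lemma 2.3 to a single inclusion of full sets, namely
$$B(U^s ,r) \subset (B(U,r))^s \quad \text{for every } r\geq 0,$$
and then to pass to Minkowski contents by comparing volumes. The geometric heart of this inclusion is already contained in Lemma 2.2, which handles it slice by slice; the remaining work is to assemble the slices and to track the volume identity through the thickening.

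First I would record that symmetrization is monotone. If $A\subset B$ are closed subsets of $X$, then $A_t \subset B_t$ for every $t$, so $Vol(A_t ,g)\leq Vol(B_t ,g)$; since $(A^s )_t$ and $(B^s )_t$ are geodesic balls in $(S^n ,g_0 )$ about the common center $E$ whose normalized volumes match those of $A_t$ and $B_t$, the smaller sits inside the larger, giving $A^s \subset B^s$. Next, because the distance from a point to $U^s$ is the infimum of its distances to the individual slices, one has $B(U^s ,r)=\bigcup_{t_0} B((U^s )_{t_0} ,r)$. Applying Lemma 2.2 to each slice and then monotonicity (using $U_{t_0}\subset U$, hence $B(U_{t_0},r)\subset B(U,r)$) yields
$$B((U^s )_{t_0} ,r)\subset (B(U_{t_0},r))^s \subset (B(U,r))^s ,$$
and taking the union over $t_0$ produces the claimed inclusion $B(U^s ,r)\subset (B(U,r))^s$.

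Now I would compute volumes. The warped-product volume formula $Vol(W)=\int_0^\pi \sin^n (t)\,Vol(W_t ,g)\,dt$ holds for any measurable $W\subset X$, so exactly as in the computation for $U$ above one gets $Vol(W^s ,g_0 )=(V_n /V)\,Vol(W,{\bf g})$ for every such $W$. Applying this to $W=B(U,r)$ and to $W=U$, and combining with the inclusion, gives
$$Vol(B(U^s ,r),g_0 )\leq Vol((B(U,r))^s ,g_0 )=\frac{V_n}{V}\,Vol(B(U,r),{\bf g}).$$
Subtracting $Vol(U^s ,g_0 )=(V_n /V)\,Vol(U,{\bf g})$, dividing by $r$, and taking $\liminf$ as $r\to 0$ (the inequality holds for each fixed $r>0$ with the positive constant $V_n /V$, so it survives the $\liminf$) yields $\mu^+ (U^s )\leq (V_n /V)\,\mu^+ (U)$. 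Since geodesic balls in the round sphere are isoperimetric and $Vol(B_U ,g_0 )=Vol(U^s ,g_0 )$, we already noted $Vol(\partial B_U )\leq \mu^+ (U^s )$, whence
$$\mu^+ (U)\geq \frac{V}{V_n}\,\mu^+ (U^s )\geq \frac{V}{V_n}\,Vol(\partial B_U ),$$
which is Lemma 2.3.

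I expect the main obstacle to be organizational rather than deep: the genuinely geometric estimate is isolated in Lemma 2.2, so the delicate points here are that the union decomposition of the $r$-neighborhood and the monotonicity of symmetrization must be invoked simultaneously to upgrade the slice inclusions to the full inclusion, and that the volume identity $Vol(W^s ,g_0 )=(V_n /V)\,Vol(W,{\bf g})$ must be verified for the thickened, less regular set $B(U,r)$ and not merely for $U$ itself. Once these two bookkeeping points are secured, the transition to Minkowski contents and the final chain of inequalities are routine.
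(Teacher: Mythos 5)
Your proposal is correct and follows essentially the same route as the paper: the key inclusion $B(U^s ,r)\subset (B(U,r))^s$ obtained from Lemma 2.2, the volume identity $Vol(W^s ,g_0 )=(V_n /V)\,Vol(W,{\bf g})$, and the passage to Minkowski contents via the $\liminf$, combined with the isoperimetric property of round balls. The only differences are cosmetic: you make explicit the monotonicity of symmetrization that the paper leaves implicit, while the paper is slightly more careful in writing $B(U^s ,r)$ as the \emph{closure} of $\bigcup_{t}B(U_t^s ,r)$ (rather than the union itself) and then using that $(B(U,r))^s$ is closed.
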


\begin{proof}
Since   $(B(U,r) )^s$ is closed 
and  $B(U^s ,r)$ is the closure of
$\cup_{t\in (0,\pi )} \ B(U_t ^s ,r)$
we have from the previous lemma that

$$B(U^s ,r) \subset (B(U,r ) )^s .$$

 Then

$$Vol(\partial B_U )\leq \mu^+ (U^s ) 
=\liminf \frac{Vol(B(U^s ,r) ) - Vol (U^s )}{r}$$

$$\leq \liminf \frac{Vol((B(U ,r))^s  ) - Vol (U^s )}{r}$$

$$=(V_n /V)\liminf \frac{Vol(B(U,r) ) - Vol (U)}{r}
 =(V_n /V) \mu^+ (U) $$

\noindent
and the lemma follows.

\end{proof}

Now if we let $B_U^M$ be a geodesic ball around a vertex in $X$ 
with volume 

$$Vol(B_U^M ,{\bf g}) = Vol(U,{\bf g} ) = 
\frac{V}{V_n} Vol(B_U, g_0 )$$

\noindent 
then it follows from (1) and (2) in the beginning of the proof that 

$$Vol(\partial B_U^M ,{\bf g}) = \frac{V}{V_n} Vol(\partial B_U ,g_0 ).$$

\noindent
and so by Lemma 2.3

$$Vol(\partial B_U^M ,{\bf g}) \leq \mu^+ (U)$$

\noindent
and Theorem 1.1 is proved.

\QED

\section{The Yamabe constant of $M\times \re$}

Now assume that $g$ is a metric of positive Ricci curvature, 
$Ricci(g) \geq (n-1)g$ on $M$ and consider as before the
spherical cone $(X,{\bf g})$ with ${\bf g} =\sin^2 (t) g + dt^2$.
By a direct  
computation the sectional curvature of ${\bf g}$ is given by:

$$K_{{\bf g}} (v_i ,v_j )=\frac{K_g (v_i ,v_j )-\cos^2 (t)}{\sin^2 (t)}$$

$$K_{\bf g} (v_i ,\partial /\partial t)=1,$$

\noindent
for a $g$-orthonormal basis $\{ v_1 ,...,v_n \}$. And the Ricci
curvature is given by:

$$Ricci({\bf g}) (v_i ,\partial /\partial t )=0$$

$$Ricci({\bf g}) (v_i ,v_j )= Ricci(g) (v_i ,v_j ) - (n-1)\cos^2 (t)\delta_i^j
+\sin^2 (t) \delta_i^j$$

$$Ricci({\bf g}) (\partial_t ,\partial_t )=n.$$ 

Therefore  by picking $\{ v_1 ,...,v_n \}$ which diagonalizes $Ricci(g)$ one
easily sees that if $Ricci(g)\geq (n-1)g$ then $Ricci({\bf g})\geq n 
{\bf g}$. Moreover, if $g$ is an Einstein metric with Einstein 
constant $n-1$ the ${\bf g}$ is Einstein with Einstein constant $n$.

Let us recall that for non-compact 
Riemannian manifolds one defines
the Yamabe constant of a metric as the infimum of the Yamabe
functional of the metric
over smooth compactly supported functions (or functions
in $L_1^2$, of course). So for instance if $g$ is a Riemannian metric
on the closed manifold $M$ then

$$Y(M\times \re ,[g+dt^2 ]) =\inf_{f \in C^{\infty}_0 (M\times \re )}
\frac{\int_{M\times \re } \left( \ a_{n+1} {\| \nabla f \|}^2 + 
{\bf s}_g  \ f^2 \ \right) 
dvol(g+dt^2)}{
{\| f \|}_{p_{n+1}}^2 } .$$

\vspace{.2cm}

{\it Proof of Theorem 1.2 :}
We have a closed Riemannian manifold $(M^n ,g)$
such that $Ricci(g) \geq (n-1) g$. Let $f_0 (t)= \cosh^{-2} (t)$
and consider the diffeomorphism 

$$H: M \times (0, \pi )  \rightarrow M \times \re  $$

\noindent
given by $H(x,t)=(x,h_0 (t))$, where $h_0 :(0,\pi ) \rightarrow \re $
is the diffeomorphism defined by $h_0 (t) =cosh^{-1} ( (\sin
(t))^{-1})$
on $[\pi /2, \pi )$ and $h_0 (t)=-h_0 (\pi /2 -t)$ if
$t\in(0,\pi /2 )$. 

By a direct computation  $H^* ( f_0 (g+dt^2))= 
{\bf g}= \sin^2 (t) g +dt^2$ on $M\times (0,\pi )$.

Therefore by conformal invariance if we call $g_{f_0} = f_0 (g+dt^2)$

$$Y(M\times \re , [g+dt^2  ] ) 
=\inf_{ f \in C^{\infty}_0 (M\times \re )}
\frac{\int_{M\times \re } \left( \ a_{n+1} {\| \nabla f \|}_{g+dt^2}^2 + 
{\bf s}_g f^2 \right) \ dvol(g+dt^2)}{
{\| f \|}_{p_{n+1}}^2 } $$

$$=\inf_{f \in C^{\infty}_0 (M\times \re )}
\frac{\int_{M\times \re } \left( \ a_{n+1}  {\| \nabla f \|}^2_{g_{f_0}} 
+ {\bf s}_{g_{f_0}} f^2 \ \right) \  
  dvol(g_{f_0} )}{
{\| f \|}_{p_{n+1}}^2 } $$

$$=\inf_{f \in C^{\infty}_0 (M\times (0,\pi ))}
\frac{\int_{M\times (0,\pi ) } \ \left( a_{n+1} {\| \nabla f \|}^2_{\bf g} 
+ {\bf s}_{\bf g} 
f^2 \ \right) \ dvol({\bf g})}{
{\| f \|}_{p_{n+1}}^2 } =Y(M\times (0,\pi ),[{\bf g}]).$$

Now, as we showed in the previous section, $Ricci({\bf g})
\geq n$. Therefore ${\bf s}_{\bf g} \geq n(n+1)$. So we get 

$$Y(M\times \re , [g+dt^2 ]) \geq 
\inf_{f \in C^{\infty}_0 (M\times (0,\pi ))}
\frac{\int_{M\times (0,\pi ) } \ \left( a_{n+1} {\| \nabla f \|}^2_{\bf g} 
+ n(n+1) 
f^2 \ \right) \ dvol({\bf g})}{
{\| f \|}_{p_{n+1}}^2 }.$$

To compute the infimum one needs to consider only non-negative
functions. 
Now for any non-negative function 
$f \in C^{\infty}_0 (M\times (0,\pi ) \  )$ consider its symmetrization
$f_* :X \rightarrow \re_{\geq 0}$ defined by $f_* (S) =\sup f$ and
$f_* (x,t) =s$ if and only if $Vol(B(S,t), {\bf g} )=
Vol(\{ f > s \} ,{\bf g})$ (i.e. $f_*$ is a 
non-increasing function of $t$
and $Vol(\{ f_* > s \})=Vol(\{ f > s \}) $ for any $s$). 
It is inmediate that the $L^q$-norms of $f_*$ and $f$ are the
same for any $q$. Also, by the coarea formula

$$\int 
 \| \nabla f \|_{\bf g}^2 = \int_0^{\infty} 
\left( \int_{f^{-1}(t)} \| \nabla f \|_{\bf g} d\sigma_t \right) dt.$$

$$ \geq  \int_0^{\infty} (\mu (f^{-1} (t)))^2 
{\left( \int_{f^{-1}(t)} \| \nabla f \|_{\bf g}^{-1} d\sigma_t 
\right)}^{-1} \ dt$$

\noindent
by H\"{o}lder's inequality, where $d\sigma_t$ is the measure induced
by ${\bf g}$ on $\{ f^{-1} (t) \}$. But 

$$\int_{f^{-1}(t)} \| \nabla f \|_{\bf g}^{-1} d\sigma_t 
=-\frac{d}{dt} (\mu\{ f>t \})$$

$$=-\frac{d}{dt} (\mu\{ f_* >t \}) = 
\int_{f_*^{-1}(t)} \| \nabla f_* \|_{\bf g}^{-1} d\sigma_t $$

\noindent 
and since $f^{-1} (t) =\partial \{ f>t \}$ by Theorem 1.1 
we have $\mu (f^{-1} (t))\geq \mu (f_*^{-1} (t))$. Therefore

$$ \int_0^{\infty} (\mu (f^{-1} (t)))^2 
{\left( \int_{f^{-1}(t)} \| \nabla f \|_{\bf g}^{-1} d\sigma_t 
\right)}^{-1} \ dt$$

$$ \geq  \int_0^{\infty} (\mu (f_*^{-1} (t)))^2 
{\left( \int_{f_*^{-1}(t)} \| \nabla f_* \|_{\bf g}^{-1} d\sigma_t 
\right)}^{-1} \ dt $$

\noindent
(and since  $\| \nabla f_* \|_{\bf g}$ is constant along 
$f_*^{-1}(t)$ )

$$=\int_0^{\infty}\mu (f_*^{-1} (t)) \| \nabla f_* \|_{\bf g} \ dt$$

$$= \int_0^{\infty} 
\left( \int_{f_* ^{-1}(t)} \| \nabla f_* \|_{\bf g} d\sigma_t \right)
dt =\int 
 \| \nabla f_*  \|_{\bf g}^2 .$$

Considering $S^{n+1}$ as the spherical cone over $S^n$ we have 
the function $f^0_* : S^{n+1} \rightarrow \re_{\geq 0}$ which
corresponds to $f_*$. 

Then for all $s$ 

$$Vol (\{ f_*^0 >s \} ) = 
\left( \frac{V_n}{V} \right) \  Vol( \{ f_* >s \},$$

\noindent
and so for any $q$,

$$\int (f^0_*)^q dvol(g_0 ) = \left( \frac{V_n}{V} \right) 
\int (f_* )^q dvol({\bf g}).$$

Also for any $s\in (0,\pi )$

$$\mu ( (f_*^0 )^{-1} (s)) = \frac{V_n}{V} \mu (f_*^{-1} (s)),$$

\noindent
and since ${\| \nabla f_*^0 \|}_{g_0} = {\| \nabla f_* \| }_{\bf g}$
we have

$$  \int 
 \| \nabla f^0_*  \|_{g_0}^2 = \frac{V_n}{V} \int 
 \| \nabla f_*  \|_{\bf g}^2 .$$

We obtain

$$Y(M\times \re , [g+dt^2 ]) \geq 
\inf_{f \in C^{\infty}_0 (M\times (0,\pi ))}
\frac{\int_{M\times (0,\pi ) } a_{n+1} {\| \nabla f \|}^2_{\bf g} 
+ n(n+1) 
f^2 \ dvol({\bf g})}{
{\| f \|}_{p_{n+1}} ^2 }$$

$$\geq \inf_{f \in C^{\infty}_0 (M\times (0,\pi ))}
\frac{\int_{M\times (0,\pi ) } a_{n+1} {\| \nabla f_* \|}^2_{\bf g} 
+ n(n+1) 
f_*^2 \ dvol({\bf g})}{
{\| f_* \|}_{p_{n+1}}^2 }$$

$$={\left( \frac{V}{V_n} \right)}^{1-(2/p_{n+1})}
\inf_{f \in C^{\infty}_0 (M\times (0,\pi ))}
\frac{\int_{M\times (0,\pi ) } a_{n+1} {\| \nabla f^0_* \|}^2_{g_0} 
+ n(n+1) 
{f^0_*}^2 dvol({g_0})}{
{\| f^0_* \|}_{p_{n+1}}^2 }$$

$$ \geq 
{\left( \frac{V}{V_n} \right)}^{2/(n+1)} \  Y_{n+1}$$

This finishes the proof of Theorem 1.2.

\QED

{\it Proof of Corollary 1.3 :} Note that if
${\bf s}_g$ is constant $Y_{\re} (M \times \re , g +
dt^2)$
only depends on ${\bf s}_g$ and $V=Vol(M,g)$ Actually,

$$Y_{\re} (M\times \re ,g +dt^2 )=
\inf_{f\in C_0^{\infty} ( \re )} \frac{\int_{\re} \  a_{n+1} {\|\nabla f
    \|}^2_{dt^2} V
+ {\bf s}_g V f^2 \ dt^2}{(\int_{\re} f^p )^{2/p} \  V^{2/p}}$$

$$=V^{1-(2/p)}
\inf_{f\in C_0^{\infty} ( \re )} \frac{\int_{\re} \  a_{n+1} {\|\nabla f
    \|}^2_{dt^2} 
+ {\bf s}_g  f^2 \ dt^2}{(\int_{\re} f^p )^{2/p}}.$$

But as we said 

$$\inf_{f\in C_0^{\infty} ( \re )} \frac{\int_{\re} \  a_{n+1} {\|\nabla f
    \|}^2_{dt^2} 
+ {\bf s}_g  f^2 \ dt^2}{(\int_{\re} f^p )^{2/p}}$$

\noindent
is independent of $(M,g)$ and it is known to be equal to 
$Y_{n+1} V_n^{-2/(n+1)}$.  Corollary 1.3 then follows 
directly from Theorem 1.2.

\QED

\end{document}